\documentclass[11pt,reqno]{amsart}

\usepackage{amsmath,amsthm,verbatim,amssymb,amsfonts,amscd, graphicx}
\usepackage{graphics}
\usepackage{geometry}
\geometry{left=2.5cm,right=2.5cm,top=2.0cm,bottom=2.0cm}
\theoremstyle{plain}

\newtheorem{defn}{Definition}[section]
\newtheorem{thm}[defn]{Theorem}
\newtheorem{cor}[defn]{Corollary}
\newtheorem{nota}[defn]{Notation}
\newtheorem{prop}[defn]{Proposition}

\newtheorem{remark}[defn]{Remark}
\newtheorem{lm}[defn]{Lemma}
\newtheorem{fact}[defn]{Fact}
\newtheorem{construction}[defn]{Construction}

\newtheorem*{1-mfd}{Proposition \ref{1-manifold}}
\newtheorem*{foliation}{Theorem \ref{foliation}}

\usepackage{extarrows}
\usepackage{mathrsfs}
\usepackage{galois}
\usepackage[colorlinks,citecolor = red, linkcolor=blue,hyperindex,CJKbookmarks]{hyperref}
\usepackage[all]{hypcap}
\usepackage[all]{xy}
\usepackage{tikz-cd}
\usepackage{latexsym}
\usepackage{subfigure}

\newcommand\blfootnote[1]{%
	\begingroup
	\renewcommand\thefootnote{}\footnote{#1}%
	\addtocounter{footnote}{-1}%
	\endgroup
}

\begin{document}
	
\title{Left orderability and taut foliations with one-sided branching}
\author{Bojun Zhao}
\address{Department of Mathematics, University at Buffalo, Buffalo, NY 14260, USA}
\email{bojunzha@buffalo.edu}
\maketitle

\begin{abstract}
	For a closed orientable irreducible $3$-manifold $M$ that admits 
	a co-orientable taut foliation with one-sided branching,
	we show that
	$\pi_1(M)$ is left orderable.
\end{abstract}

\blfootnote{2020 \emph{Mathematics Subject Classification}.
	57M50, 57M60.}

\section{Introduction}

The L-space conjecture is proposed by Boyer-Gordon-Watson (\hyperref[BGW]{[BGW]}) and 
by Juh\'asz (\hyperref[J]{[J]}),
which states that:
for every closed orientable irreducible
$3$-manifold $M$,
$M$ is a non-L-space if and only if
$\pi_1(M)$ is left orderable,
and if and only if
$M$ admits a co-orientable taut foliation.

In \hyperref[OS]{[OS]},
Ozsv\'ath and Szab\'o prove that
$M$ is a non-L-space if $M$ admits a co-orientable taut foliation
(see also \hyperref[Bo]{[Bo]}, \hyperref[KR]{[KR]}).
In \hyperref[G]{[G]},
Gabai proves that $M$ admits taut foliations if $M$ has positive first Betti number.
In \hyperref[BRW]{[BRW]},
Boyer, Rolfsen and Wiest prove that
$\pi_1(M)$ is left orderable if $b_1(M) > 0$.
It's known that the L-space conjecture holds for every graph manifold,
by the works of Boyer-Clay (\hyperref[BC]{[BC]}), 
Rasmussen (\hyperref[R]{[R]}), 
and 
Hanselman-Rasmussen-Rasmussen-Watson (\hyperref[HRRW]{[HRRW]}).

Taut foliations of $3$-manifolds can be distinguished to three types according to
the branching behavior for the leaf spaces
(of the pull-back foliations in their universal covers):
$\mathbb{R}$-covered, one-sided branching, and two-sided branching.
The generic case for taut foliations is
two-sided branching.
The $3$-manifolds that admit co-orientable $\mathbb{R}$-covered foliations are known to have
left orderable fundamental group.
In this paper,
we consider taut foliations with one-sided branching and
restrict the unknown case for the direction
``co-orientable taut foliation $\Longrightarrow$ left orderability of $\pi_1$'' to
two-sided branching.

\begin{thm}\label{foliation}
	Let $M$ be a connected, closed, orientable, irreducible $3$-manifold that 
	admits a co-orientable taut foliation with one-sided branching.
	Then $\pi_1(M)$ is left orderable.
\end{thm}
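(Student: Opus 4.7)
The plan is to produce a faithful, orientation-preserving action of $\pi_1(M)$ on a space homeomorphic to the real line; this will simultaneously embed $\pi_1(M)$ into $\mathcal{G}_\infty$ and force left orderability, since any countable group acting faithfully on $\mathbb{R}$ by orientation-preserving homeomorphisms is left orderable. I would begin from the leaf space $T = \widetilde{M}/\widetilde{\mathcal{F}}$ of the pulled back foliation on the universal cover. Co-orientability makes $T$ into an oriented, simply connected (possibly non-Hausdorff) $1$-manifold carrying a natural action of $\pi_1(M)$ by order-preserving homeomorphisms, and by Novikov's theorem the taut hypothesis forces this action to be faithful.

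The one-sided branching hypothesis is the key structural input. Assuming $T$ branches only on its positive side, every pair $x,y\in T$ has a well-defined infimum $x\wedge y$, and every downward ray is linearly ordered; $T$ is in this sense a lower semi-lattice. This asymmetry is what one uses to build a totally ordered $\pi_1(M)$-invariant replacement $L$ for $T$. A natural candidate for $L$ is the set of negative ends of $T$ (equivalently, equivalence classes of cofinal downward chains), equipped with its order topology. Because branching occurs only upward, $L$ inherits a canonical total order and a natural $\pi_1(M)$-action by orientation-preserving homeomorphisms.

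Once $L$ is in hand, two checks remain. First, one must show that $L\cong\mathbb{R}$, i.e.\ that $L$ is a separable, Dedekind-complete, order-dense, totally ordered space without endpoints. Separability should be inherited from the fact that $T$ is second countable away from branching; order-density and the absence of endpoints should come from the existence of long transversal arcs in $T$, a consequence of tautness and the compactness of $M$; Dedekind completeness is built into the cut/end definition. Second, one must verify that the induced $\pi_1(M)$-action on $L$ is still faithful: a nontrivial $g\in\pi_1(M)$ moves some leaf $\ell$ to a distinct leaf $g(\ell)$, and the one-sided branching together with the order on $T$ should ensure that the negative ends under $\ell$ and $g(\ell)$ are separated in $L$.

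The hardest step is likely the construction of $L$ itself and the verification of continuity and faithfulness for the induced action. The non-Hausdorff branch points on the positive side of $T$ make the definition of ``negative end'' delicate, and one must argue carefully that distinct leaves are not identified in passing to the completion and that nearby leaves in $T$ produce nearby points in $L$. Handling this delicacy while keeping the final action by $\pi_1(M)$ both faithful and with image in the prescribed group $\mathcal{G}_\infty$ will be the core technical work; the rest of the argument is, in essence, bookkeeping that translates the structure of one-sided branching into a total order.
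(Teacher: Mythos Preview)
Your central construction does not work as stated. With branching on the positive side, one-sided branching means precisely that $T$ has exactly \emph{one} negative end, so ``the set of negative ends'' (equivalently, classes of cofinal downward chains, as you note) is a single point: any two downward chains interleave because any two points of $T$ have a common lower bound. If you instead meant the positive ends, those form the boundary of a tree-like object and are totally disconnected rather than order-dense, so the checks you list cannot produce a copy of $\mathbb{R}$. Two further issues compound this. Your appeal to Novikov for faithfulness of the $\pi_1(M)$-action on $T$ is unjustified: a point of $T$ has stabilizer isomorphic to the fundamental group of the corresponding leaf of $\mathcal{F}$, which may be large, and you have not ruled out a nontrivial common stabilizer. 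And even a faithful action on $\mathbb{R}$ would not by itself give an embedding into $\mathcal{G}_\infty$, since the quotient $\mathrm{Homeo}_+(\mathbb{R})\to\mathcal{G}_\infty$ kills anything supported away from $+\infty$.

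The paper's argument is essentially orthogonal to yours and never produces an action on $\mathbb{R}$. Working with branching in the negative direction (so a unique positive end), it fixes an embedded line $e:\mathbb{R}\hookrightarrow L$ running to that end and observes that for every $g$ the translate $g\!\cdot\! e$ agrees with $e$ on some terminal ray; hence $g\mapsto[\,e^{-1}ge\,]$ is a well-defined homomorphism $d$ into the germ group $\mathcal{G}_\infty$ (this is Proposition~\ref{1-manifold}). To make $d$ injective the paper first arranges that every leaf of $\mathcal{F}$ is dense, then blows up one leaf $\lambda$ and extends the action across the inserted intervals via a faithful action of the leaf stabilizer on $I$; the midpoint of the interval over $\widetilde{\lambda}$ then has trivial stabilizer, and density of $\lambda$ pushes its orbit arbitrarily far along $e$, forcing $d(g)\neq 1$ for every $g\neq 1$. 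Left orderability is then quoted from Navas's theorem that $\mathcal{G}_\infty$ is left orderable. Since, by Mann, $\mathcal{G}_\infty$ admits no nontrivial homomorphism to $\mathrm{Homeo}_+(\mathbb{R})$, a line action is genuinely absent from this route.
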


In the proof of Theorem \ref{foliation},
we provide an injective homomorphism of $\pi_1(M)$ to 
the group $\mathcal{G}_\infty$ (see Definition \ref{G-infty} for the definition),
which is shown to be
left orderable by Navas and
is shown to have no nontrivial homomorphism to $\text{Homeo}_+(\mathbb{R})$ by
Mann (\hyperref[Ma]{[Ma]}).

\subsection{Acknowledgements}

The author wishes to thank Professor Xingru Zhang for helpful discussions.
He also thanks Professor Cagatay Kutluhan for valuable comments.
And he would like to thank the anonymous referee for helpful comments that led to improvements in exposition.

\section{Preliminary}

Throughout this section,
$M$ will always be a connected, closed, orientable, irreducible $3$-manifold
and $p: \widetilde{M} \to M$ is the universal covering of $M$.

\subsection{The leaf space of a taut foliation}
Suppose that $\mathcal{F}$ is a taut foliation of $M$.
We will always adopt the following notations associated to $\mathcal{F}$:

\begin{nota}\rm
	Let $\widetilde{\mathcal{F}}$ be the pull-back foliation of $\mathcal{F}$ in $\widetilde{M}$.
	Let $L(\mathcal{F})$ be the leaf space of $\widetilde{\mathcal{F}}$.
	The action of $\pi_1(M)$ on $\widetilde{M}$ by deck transformations induces
	an action of $\pi_1(M)$ on $L(\mathcal{F})$,
	which we refer to as the \emph{$\pi_1$-action on $L(\mathcal{F})$}.
\end{nota}

Note that $L(\mathcal{F})$ is an orientable, connected, simply connected $1$-manifold,
which is second countable but possibly non-Hausdorff
(\hyperref[HR]{[HR]}, \hyperref[P]{[P]}).
In addition,
$L(\mathcal{F})$ is Hausdorff if and only if it is homeomorphic to $\mathbb{R}$.
Whenever $L(\mathcal{F})$ is not homeomorphic to $\mathbb{R}$,
$L(\mathcal{F})$ has some ``non-Hausdorff'' places described as follows.

Consider two distinct points $x,y \in L(\mathcal{F})$.
We say that $x, y$ are \emph{non-separated} if,
for any point $t \in L(\mathcal{F}) - \{x,y\}$,
$x, y$ are contained in the same component of $L(\mathcal{F}) - \{t\}$.
Equivalently,
$x, y$ are non-separated if
$x, y$ have no disjoint neighborhoods.
See Figure \ref{L} (a) for a collection of points in $L(\mathcal{F})$ which 
are pairwise non-separated.

We note that,
for three points $x, y, z \in L(\mathcal{F})$ such that
$x, y$ are non-separated and $y, z$ are non-separated,
it does not necessarily follow that $x, z$ are non-separated.
As discussed in \hyperref[RSS]{[RSS, Section 4]},
we can blow-up countably many points in $L(\mathcal{F})$ as in \hyperref[RSS]{[RSS, Appendix]}, 
to obtain a $1$-manifold $L^{'}$ so that
the relation ``$x \sim y$ if $x, y \in L^{'}$ are non-separated'' is transitive on $L^{'}$ and 
thus forms an equivalence relation on $L^{'}$.
The quotient space $L^{'} / \sim$ is Hausdorff and remains simply connected
(though $L^{'} / \sim$ is no longer a $1$-manifold), 
which is referred to as the \emph{Hausdorff tree associated to} $L(\mathcal{F})$
(\hyperref[RSS]{[RSS, Definition 4.6]}).

At last, 
we explain the behavior of the $\pi_1$-action on $L(\mathcal{F})$ in the case where
$\mathcal{F}$ is co-orientable.
We choose a co-orientation on $\mathcal{F}$,
then it induces a co-orientation on $\widetilde{\mathcal{F}}$
and therefore induces an orientation on $L(\mathcal{F})$.
In this case,
$\pi_1(M)$ acts on $L(\mathcal{F})$ via orientation-preserving homeomorphisms.

\begin{figure}\label{L}
	\centering
	\subfigure[]{
		\includegraphics[width=0.3\textwidth]{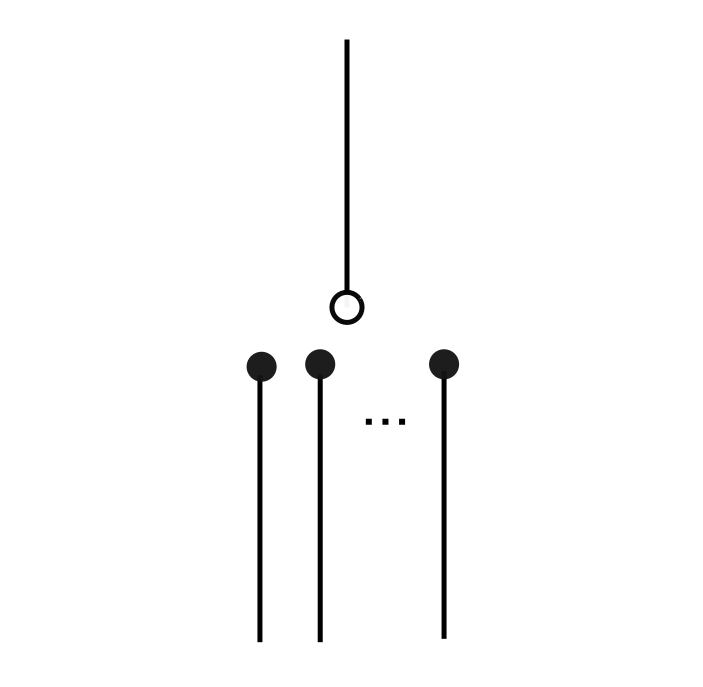}}
	\subfigure[]{
		\includegraphics[width=0.3\textwidth]{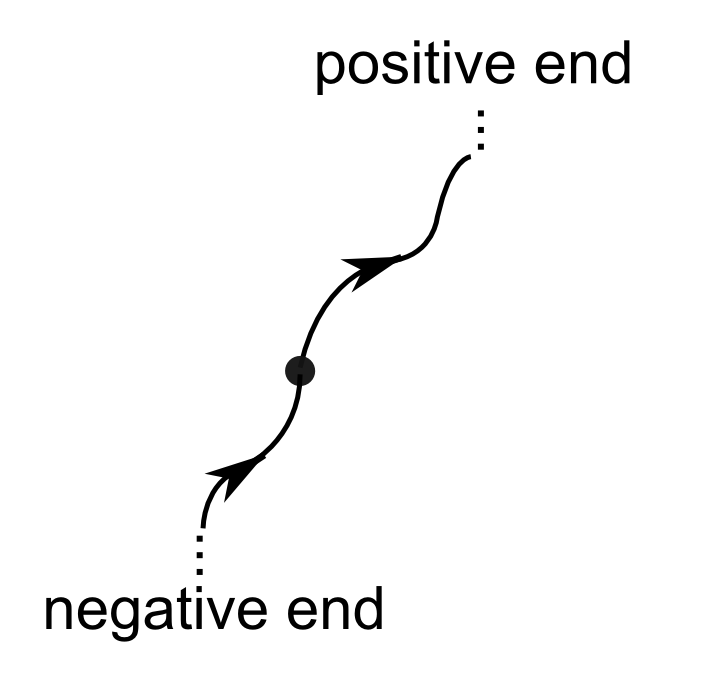}}
	\caption{Picture (a) describes a collection of points in $L(\mathcal{F})$ which are pairwise non-separated.
	Picture (b) describes the ends of $L(\mathcal{F})$:
every positive end of $L(\mathcal{F})$ can be represented by some positively oriented rays in $L(\mathcal{F})$,
and every negative end of $L(\mathcal{F})$ can be represented by some negatively oriented rays in $L(\mathcal{F})$.}
\end{figure}

\subsection{$1$-manifolds with one-sided branching}\label{subsection 2.1}
Let $L$ be an oriented, connected, simply connected $1$-manifold which 
is second countable but possibly non-Hausdorff.

By a \emph{ray} of $L$,
we mean an embedding $r: [0,+\infty) \to L$ such that 
there is no embedding $f: [0,+\infty) \to L$ with $f(0) = r(0)$ and
$r([0,+\infty)) \subsetneqq f([0,+\infty))$.

Let $\mathcal{R} = \{\text{rays of } L\}$.
And let $\sim$ be the relation on $\mathcal{R}$ such that,
for arbitrary two elements $r_1, r_2: [0,+\infty) \to L$ of $\mathcal{R}$,
$r_1 \sim r_2$ if there are 
$t_1,t_2 \in [0,+\infty)$ with $r_1([t_1,+\infty)) = r_2([t_2,+\infty))$.
It's not hard to see that ``$\sim$'' is transitive on $\mathcal{R}$,
and so ``$\sim$'' is an equivalence relation on $\mathcal{R}$.
We define
$$End(L) = \mathcal{R} / \sim,$$
and we call each element of $End(L)$ an \emph{end} of $L$.
For any $r \in \mathcal{R}$,
we will always denote by $[r]$ the end represented by $r$.

\begin{defn}\rm\label{proper embedding}
	Let $e: \mathbb{R} \to L$ be an embedding.
	We will always denote by 
	$e_+: [0, +\infty) \to L$ the restriction of $e$ to $[0, +\infty)$,
	and we will always denote by
	$e_-: [0, +\infty) \to L$ the map defined by
	$e_-(t) = e(-t)$ for any $t \in [0, +\infty)$.
	We call $e$ a \emph{proper embedding} if both of $e_+, e_-$ are rays of $L$.
\end{defn}

Given an embedded path $j: [0,1] \to L$,
a ray $r: [0, +\infty) \to L$,
or a proper embedding $e: \mathbb{R} \to L$,
each of $j, r, e$ is said to be \emph{positively oriented} (resp. \emph{negatively oriented}) if
the increasing orientation on its domain is consistent with (resp. \emph{opposite to}) 
the orientation on $L$.

\begin{defn}\rm
An end $t$ of $L$ is \emph{positive} (resp. \emph{negative}) if
there is a positively oriented ray (resp. negatively oriented ray)
$r: [0,+\infty) \to L$ with $[r] = t$.
See Figure \ref{L} (b).
\end{defn}

Let $G$ be a group acting on $L$ by homeomorphisms.
We note that this action induces an action of $G$ on $End(L)$.
In addition,
for any $g \in G$,
if $g: L \to L$ is an orientation-preserving homeomorphism,
then the endomorphism on $End(L)$ induced from $g$ takes positive ends to positive ends and
takes negative ends to negative ends.
Thus,
in the case that $G$ acts on $L$ via orientation-preserving homeomorphisms,
under the induced action of $G$ on $End(L)$,
the set of positive ends and the set of negative ends are both
$G$-invariant subsets of $End(L)$.

Below, 
we describe the branching behavior of $L$,
which basically follows from \hyperref[C2]{[C2, Chapter 4.7, page 169]}.
There is a strict partial order ``$\stackrel{_{L}}{>}$'' on $L$ defined as follows.
Let $u, v \in L(\mathcal{F})$.
Then
$u \stackrel{_{L}}{>} v$ if
$u \ne v$ and there is a positively oriented embedded path in $L$ from $v$ to $u$.
In addition,
$u, v$ are said to be \emph{comparable} if one of these three cases holds:
$u = v$, $u  \stackrel{_L}{>} v$, $u \stackrel{_L}{<} v$.
And $u, v$ are said to be \emph{incomparable} if $u, v$ are not comparable.

Compare the following definition with
\hyperref[C2]{[C2, Chapter 4.7, Definition 4.41 (2)]}.

\begin{defn}\rm\label{one-sided branching}
	$L$ is said to have \emph{one-sided branching} if either of the following statements holds:
	
	$\bullet$
	$L$ is not homeomorphic to $\mathbb{R}$,
	and moreover,
	for any $u,v \in L(\mathcal{F})$,
	there exists $s \in L(\mathcal{F})$ such that
	$s \stackrel{_{L}}{>} u, v$.
	In this case,
	$L$ is said to have \emph{(one-sided) branching in the negative direction}.
	
	$\bullet$
	$L$ is not homeomorphic to $\mathbb{R}$,
	and moreover,
	for any $u,v \in L(\mathcal{F})$,
	there exists $s \in L(\mathcal{F})$ such that
	$s \stackrel{_{L}}{<} u, v$.
	In this case,
	$L$ is said to have \emph{(one-sided) branching in the positive direction}.
\end{defn}

Now,
we provide a desciption for Definition \ref{one-sided branching} from the ends of $L$.
We consider $L$ as an order tree,
then by \hyperref[RS]{[RS, Definitions 3.2, 3.5, Theorem 3.6]},
if $u \stackrel{_{L}}{>} v$ for $u,v \in L$,
then there is a unique embedded path in $L$ from $v$ to $u$.
We denote this embedded path by $\gamma_L(v,u)$.

\begin{lm}\rm\label{one end}
	(a)
	$L$ has one-sided branching in the negative direction if and only if
	$L$ has exactly one positive end and more than one negative end.
	
	(b)
	$L$ has one-sided branching in the positive direction if and only if
	$L$ has exactly one negative end and more than one positive end.
\end{lm}
\begin{proof}
	To be convenient,
	we only prove (a).
	
	Suppose that $L$ has exactly one positive end and more than one negative end.
	Let $u, v \in L$.
	We choose two positively oriented rays $r_u, r_v: [0, +\infty) \to L$ with
	$r_u(0) = u$, $r_v(0) = v$.
	Because $L$ has exactly one positive end,
	$[r_u] = [r_v]$.
	So there exists $s \in r_u([0, +\infty)) \cap r_v([0, +\infty))$,
	and we have $s \stackrel{_{L}}{>} u, v$.
	Note that
	$L$ is not homeomorphic to $\mathbb{R}$ since it has more than one negative end.
	Hence
	$L$ has one-sided branching in the negative direction.
		
	Now suppose that $L$ has one-sided branching in the negative direction.
	We first claim that,
	for any two positively oriented rays $r_1, r_2: [0,+\infty) \to L$ with $r_1(0) = r_2(0)$,
	we have $[r_1] = [r_2]$.
	Let $a = r_1(0) = r_2(0)$.
	We assume otherwise $[r_1] \ne [r_2]$.
	Then there is
	$b \in r_1([0,+\infty)) - r_2([0,+\infty))$.
	If $b \stackrel{_L}{>} t$ for all $t \in r_2([0,+\infty))$,
	then $r_2([0,+\infty)) \subsetneqq \gamma_L(a, b)$,
	which is impossible.
	So there is $c \in r_2([0,+\infty))$ with
	$b \stackrel{_L}{\ngtr} c$ (here, $b \stackrel{_L}{\ngtr} c$ means that $b \stackrel{_L}{>} c$ doesn't hold).
	By Definition \ref{one-sided branching},
	there exists $d\in L$ such that
	$d \stackrel{_{L}}{>} b, c$.
	Note that $a \stackrel{_L}{<} b, c$ since $b \in r_1([0,+\infty))$, $c \in r_2([0,+\infty))$.
	Hence $a \stackrel{_L}{<} d$ and $b, c \in \gamma_L(a,d)$.
	It follows that
	either $b \in \gamma_L(a,c)$ or
	$b \in \gamma_L(c,d) - \{c\}$.
	As $b \stackrel{_L}{\ngtr} c$,
	we have $b \notin \gamma_L(c,d) - \{c\}$.
	So $b \in \gamma_L(a,c)$.
	Note that
	$c \in r_2([0,+\infty))$ implies
	$\gamma_L(a,c) \subseteq r_2([0,+\infty))$,
	and therefore
	$b \in r_2([0,+\infty))$.
	This contradicts the condition
	$b \in r_1([0,+\infty)) - r_2([0,+\infty))$ given above.
	Thus,
	$[r_1] = [r_2]$.
	
	Let $x, y \in L$ and 
	let $r_x, r_y: [0,+\infty) \to L$ be two positively oriented rays of $L$ with
	$x = r_x(0)$, $y = r_y(0)$.
	By Definition \ref{one-sided branching},
	there exists $t \in L$ such that
	$t \stackrel{_{L}}{>} x, y$.
	Let $r_t: [0,+\infty) \to L$ be a positively oriented ray of $L$ with $r_t(0) = t$.
	Then $\gamma_L(x,t) * r_t$ is a positively oriented ray of $L$ that starts at $x$,
	where $\gamma_L(x,t) * r_t$ denotes the concatenation of $\gamma_L(x,t)$ and $r_t$.
	Also,
	$\gamma_L(y,t) * r_t$ is a positively oriented ray of $L$ that starts at $y$.
	By the claim as above,
	we have
	$[r_x] = [\gamma_L(x,t) * r_t]$,
	$[r_y] = [\gamma_L(y,t) * r_t]$.
	Then
	$$[r_x] = [\gamma_L(x,t) * r_t] = [r_t] = [\gamma_L(y,t) * r_t] = [r_y].$$
	Thus,
	any two positively oriented rays in $L$ represent the same positive end of $L$,
	and therefore $L$ has a unique positive end.
	
	It remains to show that $L$ has more than one negative end.
	Suppose otherwise that $L$ has exactly one negative end.
	Since $L$ is not homeomorphic to $\mathbb{R}$,
	there exist $m, n \in L$ which are incomparable.
	Let $r^{+}_{m}, r^{+}_{n}: [0,+\infty) \to L$ be 
	two positively oriented rays of $L$ with $r^{+}_{m}(0) = m$, $r^{+}_{n}(0) = n$,
	and let $r^{-}_{m}, r^{-}_{n}: [0,+\infty) \to L$ be 
	two negatively oriented rays of $L$ with $r^{-}_{m}(0) = m$, $r^{-}_{n}(0) = n$.
	Then $[r^{+}_{m}] = [r^{+}_{n}]$,
	$[r^{-}_{m}] = [r^{-}_{n}]$.
	So there exist $q \in r^{+}_{m}([0,+\infty)) \cap r^{+}_{n}([0,+\infty))$, 
	$s \in r^{-}_{m}([0,+\infty)) \cap r^{-}_{n}([0,+\infty))$,
	and we have $q \stackrel{_L}{>} m, n$,
	$s \stackrel{_L}{<} m, n$.
	Thus, 
	$s \stackrel{_L}{<} q$ 
	and $m, n \in \gamma_L(s, q)$.
	It follows that $m, n$ are comparable,
	which is a contradiction.
	Therefore,
	$L$ has more than one negative end.
\end{proof}

\subsection{The three types of foliations}\label{subsection 2.2}

Suppose that $M$ admits a taut foliation $\mathcal{F}$.
We fix a co-orientation on $\widetilde{\mathcal{F}}$,
which induces an orientation on $L(\mathcal{F})$.
As described in \hyperref[C2]{[C2, Definition 4.41]},

\begin{defn}\rm\label{one-sided}
	$\mathcal{F}$ has exactly one of the following three types:
	
	(a)
	$\mathcal{F}$ is \emph{$\mathbb{R}$-covered} if
	$L(\mathcal{F})$ is homeomorphic to $\mathbb{R}$.
	
	(b)
	$\mathcal{F}$ has \emph{one-sided branching} if
	$L(\mathcal{F})$ has one-sided branching.
	
	(c)
	$\mathcal{F}$ has \emph{two-sided branching} if
	$\mathcal{F}$ is not $\mathbb{R}$-covered and does not have one-sided branching.
\end{defn}

For concreteness,
we describe Definition \ref{one-sided} (b), (c) by using the ends of $L(\mathcal{F})$.
We note that,
having more than one positive (or negative) end in $L(\mathcal{F})$ implies
the existence of infinitely many positive (or negative) ends in $L(\mathcal{F})$.
For the reader's convenience,
we include a proof of this property of $L(\mathcal{F})$.

\begin{lm}\rm
	(a)
	If $L(\mathcal{F})$ has more than one positive end,
	then $L(\mathcal{F})$ has infinitely many positive ends.
	
	(b)
	If $L(\mathcal{F})$ has more than one negative end,
	then $L(\mathcal{F})$ has infinitely many negative ends.
\end{lm}
\begin{proof}
	We only prove (a);
	the proof of (b) is entirely similar.
	Let $G = \pi_1(M)$,
	and we may consider $G$ as the group of deck transformations of $\widetilde{M}$.
	We first assume further that $\mathcal{F}$ is co-orientable and
	prove (a) under this assumption.
	
	For any $s \in L(\mathcal{F})$,
	we define 
	$$E_+(s) = \{[r] \mid r: [0,+\infty) \to L(\mathcal{F}) \text{ is a positively oriented ray with } r(0) = s\}.$$
	We claim that there exists $s \in L(\mathcal{F})$ with $|E_+(s)| > 1$,
	where $|E_+(s)|$ denotes the cardinality of $E_+(s)$.
	Now suppose otherwise that $|E_+(s)| = 1$ for all $s \in L(\mathcal{F})$.
	Note that for any $t_1, t_2 \in L(\mathcal{F})$ with $t_1 \stackrel{_{L(\mathcal{F})}}{>} t_2$,
	we always have $E_+(t_1) = E_+(t_2)$ since
	$E_+(t_1) \subseteq E_+(t_2)$ and $|E_+(t_1)| = |E_+(t_2)| = 1$.
	Because $L(\mathcal{F})$ has more than one positive end,
	there are $u, v \in L(\mathcal{F})$ with $E_+(u) \ne E_+(v)$.
	As described in \hyperref[GO]{[GO, Definition 6.9 (3), Proposition 6.10]},
	there is a finite sequence of points $u_0 = u, u_1, \ldots, u_{n-1}, u_n = v$ such that
	$u_i, u_{i+1}$ are comparable for every $0 \leqslant i \leqslant n-1$.
	It follows that $E_+(u_0), E_+(u_1), \ldots, E_+(u_n)$ are all equal,
	which is a contradiction.
	
	Thus,
	there exists $s \in L(\mathcal{F})$ with $|E_+(s)| > 1$.
	Let $r_1, r_2: [0, +\infty) \to L(\mathcal{F})$ be two positively oriented rays such that 
	$r_1(0) = r_2(0) = s$ but $[r_1] \ne [r_2]$.
	We choose $t \in r_1([0,+\infty)) - r_2([0,+\infty))$.
	Then $[r_2] \notin E_+(t)$.
	Because $\mathcal{F}$ is a taut foliation,
	there is a simple closed curve in $M$ transverse to $\mathcal{F}$ that
	intersects all leaves of $\mathcal{F}$.
	This condition implies that,
	there is a positively oriented path in $L(\mathcal{F})$ that starts at $t$,
	ends at certain point in the orbit of $t$ (under the $\pi_1$-action on $L(\mathcal{F})$),
	such that its interior contains certain point $s^{'}$ in the orbit of $s$ (under the $\pi_1$-action on $L(\mathcal{F})$).
	Let $g \in G$ with $s^{'} = g(s)$.
	We have $g(s) = s^{'} \stackrel{_{L(\mathcal{F})}}{>} t \stackrel{_{L(\mathcal{F})}}{>} s$.
	Since $\mathcal{F}$ is co-orientable,
	$g: L(\mathcal{F}) \to L(\mathcal{F})$ is an orientation-preserving homeomorphism,
	which implies that $g(a) \stackrel{_{L(\mathcal{F})}}{<} g(b)$ for any 
	$a, b \in L(\mathcal{F})$ with
	$a \stackrel{_{L(\mathcal{F})}}{<} b$.
	So
	$s \stackrel{_{L(\mathcal{F})}}{<} t \stackrel{_{L(\mathcal{F})}}{<} g(s) \stackrel{_{L(\mathcal{F})}}{<} 
	g^{2}(s) \stackrel{_{L(\mathcal{F})}}{<} 
	g^{3}(s) \stackrel{_{L(\mathcal{F})}}{<} \ldots$.
	Let $n \in \mathbb{N}$.
	Then
	
	(1)
	Since $[r_2] \notin E_+(t)$ and $g^{n}(s) \stackrel{_{L(\mathcal{F})}}{>} t$,
	$[r_2] \notin E_+(g^{n}(s))$.
	
	(2)
	For simplicity,
	we denote by $g^{n}([r_2])$ the image of $[r_2]$ under the endomorphism on $End(L(\mathcal{F}))$ induced from
	$g^{n}: L(\mathcal{F}) \to L(\mathcal{F})$.
	Since $g^{n}$ is orientation-preserving,
	$g^{n}([r_2])$ is a positive end of $L(\mathcal{F})$.
	Because $[r_2] \in E_+(s)$,
	we have 
	$g^{n}([r_2]) \in E_+(g^{n}(s))$.
	
	It follows that $[r_2], g^{n}([r_2])$ are distinct positive ends of $L(\mathcal{F})$.
	Consequently,
	$[r_2], g([r_2]), g^{2}([r_2]), \ldots$
	are positive ends of $L(\mathcal{F})$ that are distinct from each other.
	So $L(\mathcal{F})$ has infinitely many positive ends.
	
	Now assume that $\mathcal{F}$ is not co-orientable.
	Then $M$ has a double cover $M^{'}$ such that 
	$\mathcal{F}$ pulls-back to a co-orientable taut foliation of $M^{'}$ (denoted $\mathcal{F}^{'}$).
	Note that 
	$\widetilde{M}$ is also a universal cover of $M^{'}$ and
	$\widetilde{\mathcal{F}}$ is still the pull-back foliation of $\mathcal{F}^{'}$ in $\widetilde{M}$. 
	Thus $L(\mathcal{F})$ is homeomorphic to $L(\mathcal{F}^{'})$.
	We assign $L(\mathcal{F}^{'})$ an orientation induced from the orientation on $L(\mathcal{F})$.
	Similar to the above discussions,
	we can ensure that $L(\mathcal{F}^{'})$ has infinitely many positive ends.
	Thus $L(\mathcal{F})$ also has infinitely many positive ends.
\end{proof}

Combined with Lemma \ref{one end},

\begin{cor}
	(a)
	$\mathcal{F}$ has one-sided branching if and only if,
	$L(\mathcal{F})$ either has exactly one positive end and infinitely many negative ends
	or has exactly one negative end and infinitely many positive ends.
	
	(b)
	$\mathcal{F}$ has two-sided branching if and only if
	$L(\mathcal{F})$ has infinitely many positive ends and infinitely many negative ends.
\end{cor}

For taut foliations,
$\mathbb{R}$-covered is well studied,
two-sided branching is the generic case,
and one-sided branching has an intermediate role between them.

As noted in \hyperref[C1]{[C1, Subsection 2.1]},

\begin{fact}\rm
	If $\mathcal{F}$ has one-sided branching,
	then $\mathcal{F}$ is co-orientable.
\end{fact}

We refer the reader to \hyperref[C1]{[C1]} and \hyperref[F]{[F]} for 
more properties of taut foliations with one-sided branching.

There are many important examples of taut foliations with one-sided branching.
In \hyperref[Me]{[Me]},
Meigniez provides infinitely many taut foliations with one-sided branching in hyperbolic $3$-manifolds.
In \hyperref[C1]{[C1, Example 5.02]}, \hyperref[C2]{[C2, Example 4.43]},
Calegari describes the construction of 
taut foliations with one-sided branching in some closed $3$-manifolds obtained from 
taking finite branched covers over 
certain knots in some closed $3$-manifolds with $\mathbb{R}$-covered foliations.

\subsection{The blowing-up operations}\label{subsection 1.4}

For a set $C \subseteq \mathbb{R}$ consisting of countably many points,
Denjoy (\hyperref[D]{[D]}) introduces the 
\emph{blowing-up} operation for $C$ in $\mathbb{R}$.
As a result,
each point of $C$ is ``replaced'' by 
a new closed subinterval of $\mathbb{R}$ through the blowing-up operation. 
Similarly,
the blowing-up operation can be defined for a leaf of a taut foliation and
for a countable set of points in a possibly non-Hausdorff $1$-manifold.
Below,
we provide a brief description of these aspects along with
some references.

Suppose that $M$ admits a taut foliation $\mathcal{F}$,
and let $\lambda$ be a leaf of $\mathcal{F}$.
We refer the reader to \hyperref[C2]{[C2, Example 4.14]} for 
the blowing-up operation for $\lambda$ in $\mathcal{F}$.
In this operation,
the leaf $\lambda$ is replaced by 
a product subspace $\lambda \times I$ of $M$ foliated with leaves $\{\lambda \times \{t\} \mid t \in I\}$.
It's not hard to see that the resulting foliation is still taut.

Let $\mathcal{F}_0$ denote the taut foliation obtained from blowing-up $\lambda$ in $\mathcal{F}$.
Let $\widetilde{\lambda}$ be a component of $p^{-1}(\lambda)$,
which is also a leaf of $\widetilde{\mathcal{F}}$.
Let $G = \pi_1(M)$,
and we may consider $G$ as the group of deck transformations of $\widetilde{M}$.
Now consider the foliation $\widetilde{\mathcal{F}_0}$ in $\widetilde{M}$.
For each $g \in G$,
there is a product subspace $g(\widetilde{\lambda}) \times I$ of $\widetilde{M}$ foliated with leaves 
$\{g(\widetilde{\lambda}) \times \{t\} \mid t \in I\}$ of $\widetilde{\mathcal{F}_0}$,
which is also a component of $p^{-1}(\lambda \times I)$.

We refer the reader to \hyperref[RSS]{[RSS, 9. Appendix: Denjoy blow-ups]} for
the blowing-up operation for a countable set of points in a possibly non-Hausdorff $1$-manifold.
Now,
we consider every leaf of $\widetilde{\mathcal{F}}$ as a point of $L(\mathcal{F})$ and
consider every leaf of $\widetilde{\mathcal{F}_0}$ as a point of $L(\mathcal{F}_0)$.
Then $L(\mathcal{F}_0)$ is obtained from blowing-up the countable set 
$\{g(\widetilde{\lambda}) \mid g \in G\}$ of $L(\mathcal{F})$,
where every point $g(\widetilde{\lambda}) \in L(\mathcal{F})$ is replaced by the interval
$g(\widetilde{\lambda}) \times I \subseteq L(\mathcal{F}_0)$.

\subsection{The group $\mathcal{G}_\infty$}
We recall the definition of the group $\mathcal{G}_\infty$ below (see for example \hyperref[Ma]{[Ma, Definition 1.3]}):

\begin{defn}\rm\label{G-infty}
	Let $\mathcal{G}_{\infty} = \text{Homeo}_+(\mathbb{R}) / \sim$,
	where $\sim$ is the equivalence relation on $\text{Homeo}_+(\mathbb{R})$ defined by
	$g \sim f$ if there is $n \in \mathbb{R}$ such that
	the restrictions of $g,f$ to $[n,+\infty)$ are equal.
	Henceforth,
	for every $g \in \text{Homeo}_+(\mathbb{R})$,
	we will always denote by
	$[g]$ the image of $g$ under the quotient map
	$\text{Homeo}_+(\mathbb{R}) \to \mathcal{G}_\infty$.
	We assume that the multiplication on $\text{Homeo}_+(\mathbb{R})$ is given by
	the left group action,
	i.e. $fg = f \circ g$.
	Define $[f] \cdot [g] = [fg] = [f \circ g]$ for all
	$f,g \in \text{Homeo}_+(\mathbb{R})$.
	This multiplication is well-defined on $\mathcal{G}_\infty$ and
	makes $\mathcal{G}_{\infty}$ a group.
\end{defn}

Navas proves the following theorem,
by applying the criterion that,
a group $G$ is left orderable if and only if
for any finite subset $\{g_1, \ldots, g_n\}$ of $G$ that does not contain the identity,
there is $\epsilon_i \in \{-1,1\}$ ($1 \leqslant i \leqslant n$) such that
the semigroup generated by $g^{\epsilon_1}_1,\ldots,g^{\epsilon_n}_n$ does not contain the identity.
For a proof see \hyperref[Ma]{[Ma, Proposition 2.2]}.

\begin{thm}[Navas]\label{Navas}
	$\mathcal{G}_{\infty}$ is left orderable.
\end{thm}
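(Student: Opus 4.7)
My plan is to exhibit a faithful action of $\mathcal{G}_\infty$ by order-preserving bijections on a totally ordered set $X$, and then convert this into a left-invariant order via the standard lexicographic construction.

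For $X$ I would use the positive infinite elements of an ultrapower of $\mathbb{R}$. Fix a non-principal ultrafilter $\omega$ on $\mathbb{N}$ and form $\mathbb{R}^{*} = \mathbb{R}^{\mathbb{N}}/\omega$, which is a totally ordered set under the ultrapower order ($[(x_n)] < [(y_n)]$ iff $\{n : x_n < y_n\} \in \omega$). Let
\[
X = \{\xi \in \mathbb{R}^{*} : \xi > r \text{ for every } r \in \mathbb{R}\}
\]
be the set of positive infinite elements of $\mathbb{R}^{*}$. Any $f \in Homeo_+(\mathbb{R})$ acts on $\mathbb{R}^{*}$ coordinate-wise by $f \cdot [(x_n)] := [(f(x_n))]$; because $f$ is an order-preserving bijection of $\mathbb{R}$ with $\lim_{x \to +\infty} f(x) = +\infty$, this action is by order-preserving bijections on $\mathbb{R}^{*}$ and it preserves the subset $X$.

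Next I would check that this action factors through $\mathcal{G}_\infty$ and is faithful. If $f \sim f'$ in the sense of Definition \ref{G-infty}, with $f|_{[N,+\infty)} = f'|_{[N,+\infty)}$, then for any $[(x_n)] \in X$ we have $x_n > N$ on an $\omega$-large set of indices, so $f(x_n) = f'(x_n)$ there; hence $f \cdot [(x_n)] = f' \cdot [(x_n)]$ in $\mathbb{R}^{*}$. This yields a homomorphism $\rho$ from $\mathcal{G}_\infty$ to the group of order-preserving bijections of $X$. For faithfulness, suppose $[f] \ne 1$. By definition of $\sim$, for each positive integer $k$ there is some $m_k \ge k$ with $f(m_k) \ne m_k$. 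Then $\xi = [(m_k)_{k \in \mathbb{N}}] \in X$ because $m_k \to +\infty$, and $f \cdot \xi = [(f(m_k))] \ne [(m_k)] = \xi$ since the two sequences disagree on all of $\mathbb{N} \in \omega$. Thus $\rho$ is injective.

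Finally, to produce the left-invariant order on $\mathcal{G}_\infty$, fix a well-ordering $\prec$ of $X$ and view $\mathcal{G}_\infty$ as a subset of $X^X$ via $\rho$. For $[f] \ne [g]$ in $\mathcal{G}_\infty$, let $x_0 \in X$ be the $\prec$-least element at which $[f]$ and $[g]$ differ as functions, and declare $[f] < [g]$ iff $[f](x_0) < [g](x_0)$ in the order on $X$. This defines a total order on $\mathcal{G}_\infty$, and it is left-invariant because left-multiplication by any $[k] \in \mathcal{G}_\infty$ does not change the $\prec$-least disagreement index (as $[k]$ is a bijection on $X$) and preserves the comparison at that index (as $[k]$ is order-preserving on $X$). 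The step I expect to require the most care is setting up $X$ so that the coordinate-wise action is simultaneously well-defined, preserves $X$, descends to $\mathcal{G}_\infty$, and remains faithful; all four conditions are arranged precisely by taking $X$ to be the positive infinite elements of $\mathbb{R}^{*}$.
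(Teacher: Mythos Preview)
The paper does not give its own proof of this theorem; it is stated as a result of Navas, cited to [N], and then invoked as a black box at the end of Section~3. So there is nothing in the paper to compare your argument against.

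That said, your proof is correct. The coordinate-wise action of $Homeo_+(\mathbb{R})$ on the ultrapower $\mathbb{R}^{*}$ is well-defined and order-preserving, and restricting to the positive infinite elements $X$ is exactly what makes the action depend only on the germ at $+\infty$: any representative of a class in $X$ exceeds every fixed real on an $\omega$-large set of indices, so two homeomorphisms agreeing on some $[N,+\infty)$ act identically on $X$. Your faithfulness check is also sound: from $[f]\ne 1$ you extract $m_k\geqslant k$ with $f(m_k)\ne m_k$, and since $\{k:m_k>r\}$ is cofinite for every real $r$, the class $[(m_k)]$ lies in $X$ and is moved by $f$. The final step, well-ordering $X$ and comparing group elements at their $\prec$-least point of disagreement, is the standard device for converting a faithful order-preserving action on a totally ordered set into a left-invariant total order; your verification of left-invariance is right (left-multiplication by $[k]$ changes neither the $\prec$-least disagreement point nor the comparison there), and transitivity follows by the usual case split on which of two first-disagreement points is $\prec$-earlier.

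In short, your argument is a clean, self-contained substitute for the citation; it is also close in spirit to how left-orderability of germ groups is typically established, since any such proof must ultimately exploit only the behavior of homeomorphisms near $+\infty$, and an ultrafilter is a natural way to package that.
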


\begin{remark}\rm
	Recall that a countable group is left orderable if and only if 
	it acts effectively on $\mathbb{R}$ via orientation-preserving homeomorphism,
	which implies that every countable subgroup of $\mathcal{G}_\infty$
	is a subgroup of $\text{Homeo}_+(\mathbb{R})$.
	However,
	Mann (\hyperref[Ma]{[Ma]}) proves that the cardinality of $\mathcal{G}_{\infty}$ is equal to the cardinality of 
	$\text{Homeo}_+(\mathbb{R})$,
	but there exists no nontrivial homomorphism $\mathcal{G}_{\infty} \to \text{Homeo}_+(\mathbb{R})$. 
	See \hyperref[Ma]{[Ma]} for more information about $\mathcal{G}_{\infty}$.
\end{remark}

\section{The proof of the main theorem}

\subsection{The group action on a $1$-manifold with one-sided branching}\label{subsection 3.1}

Let $L$ be an oriented, connected, simply connected $1$-manifold which 
is second countable but possibly non-Hausdorff,
and we assume that $L$ has one-sided branching.
Without loss of generality,
we may assume that $L$ has branching in the negative direction.
Let $G$ be a group acting on $L$ via orientation-preserving homeomorphisms.
Let $e: \mathbb{R} \to L$ be a positively oriented proper embedding. 
Then
$$[e_+] = \text{the positive end of } L,$$
$$[e_-] \in \{\text{negative ends of } L\}.$$
We fix this proper embedding $e$ throughout this subsection.

In this subsection,
we prove the following proposition:

\begin{prop}\label{1-manifold}
	There is a homomorphism $d: G \to \mathcal{G}_\infty$,
	satisfying that
	$d(h) \ne 1$ for all $h \in G$ with the following property:
	for arbitrary $n \in \mathbb{R}$,
	there is $m \in (n,+\infty)$ with $h(e(m)) \ne e(m)$.
\end{prop}

\begin{nota}\rm
	We assume that the action of $G$ on $L$ is a left group action,
	i.e. for arbitrary $f,g \in G$,
	we have $fg(x) = f(g(x))$ for every $x \in L$.
	Henceforth,
	for arbitrary functions $u: Y \to Z, v: X \to Y$,
	we will always denote by $uv$ the composition $u \circ v$.
	For example,
	given $f,g \in G$,
	$fge$ will always denote the function 
	$f \circ g \circ e: \mathbb{R} \stackrel{e}{\to} L \stackrel{g}{\to} L \stackrel{f}{\to} L$.
\end{nota}

\begin{figure}\label{e and ge}
	\includegraphics[width=0.6\textwidth]{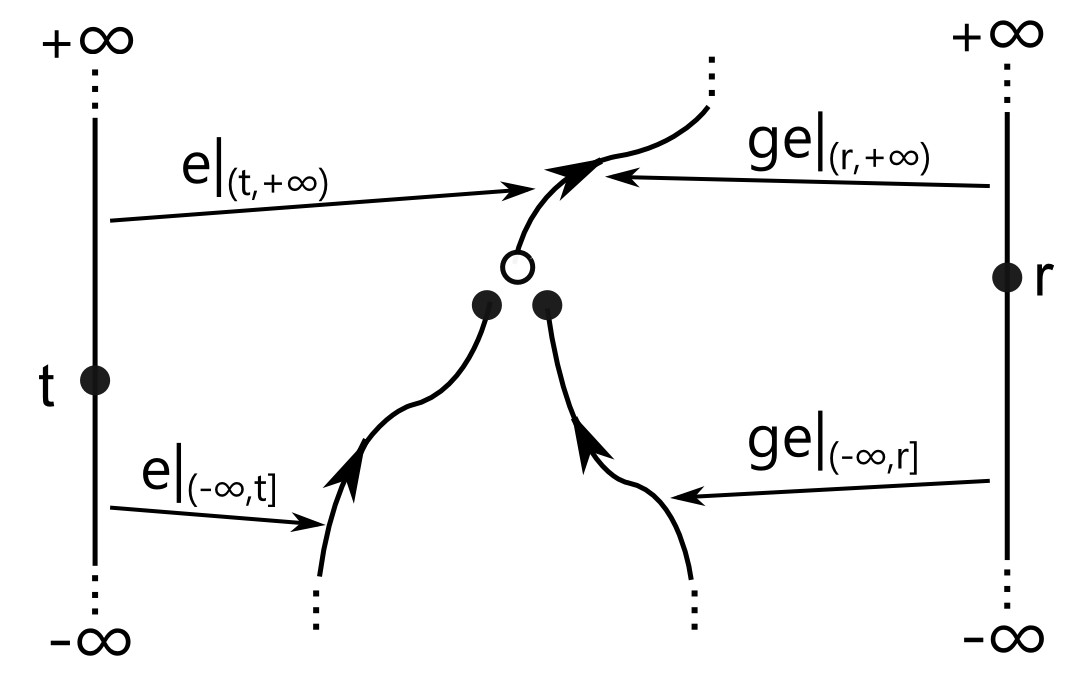}
	\caption{For $g \in G$ with $ge(\mathbb{R}) \ne e(\mathbb{R})$,
	there are $t,r \in \mathbb{R}$ such that 
	the image of $(t,+\infty)$ under $e$ is same as
    the image of $(r,+\infty)$ under $ge$,
    but $e(t) \ne ge(r)$}
\end{figure}

Let $g \in G$.
Since $g: L \to L$ is an orientation-preserving homeomorphism,
$ge: \mathbb{R} \to L$ is also a positively oriented proper embedding.
Thus $$[e_+] = [(ge)_+] = \text{the positive end of } L.$$
We can observe that one of the following two possibilities happens:

$\bullet$
$e(\mathbb{R}) = ge(\mathbb{R})$.

$\bullet$
$e(\mathbb{R}) \ne ge(\mathbb{R})$.
Then there are $J = (t,+\infty), K = (r,+\infty)$ for some $t,r \in \mathbb{R}$ such that
$$e(J) = ge(K) = e(\mathbb{R}) \cap ge(\mathbb{R}),$$
see Figure \ref{e and ge} for an example.

Thus,

\begin{fact}\rm\label{t exists}
Let $H = \{g_1,\ldots,g_n\} \subseteq G$ be a finite subset of $G$.
Let $$\mu = \bigcap_{1 \leqslant i \leqslant n}g_ie(\mathbb{R}).$$
Then $\mu \ne \emptyset$.
Moreover,
for each $1 \leqslant i \leqslant n$,
there is $J_i = [t_i,+\infty)$ for some $t_i \in \mathbb{R}$ such that
$g_ie(J_i) \subseteq \mu$.
\end{fact}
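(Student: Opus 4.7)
The plan is to argue by induction on $n = |H|$. The base case $n=1$ is immediate: any $t_1 \in \mathbb{R}$ works, since $g_1 e([t_1,+\infty)) \subseteq g_1 e(\mathbb{R}) = \mu$.

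The main tool is an enhancement of the dichotomy observed just above the statement. For any pair $g_i, g_j \in G$, both $g_ie$ and $g_je$ are embeddings $\mathbb{R} \to L$ whose positive end is the unique positive end of $L$, so applying the original dichotomy to $e$ versus $(g_j^{-1}g_i)e$ and then post-composing with $g_j$ yields that either $g_ie(\mathbb{R}) = g_je(\mathbb{R})$, or else $g_ie(\mathbb{R}) \cap g_je(\mathbb{R}) = g_ie([s_{ij},+\infty)) = g_je([u_{ij},+\infty))$ for some $s_{ij},u_{ij} \in \mathbb{R}$. In either case one obtains, for each ordered pair $(i,j)$, constants for which a positive tail of $g_ie$ lies inside $g_je(\mathbb{R})$.

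For the inductive step, assume the conclusion for $\{g_1,\ldots,g_{n-1}\}$: let $\mu' = \bigcap_{i<n} g_ie(\mathbb{R})$ and let $t_1',\ldots,t_{n-1}'$ be constants with $g_ie([t_i',+\infty)) \subseteq \mu'$. For each $i<n$, apply the enhanced dichotomy to the pair $(g_ie, g_ne)$ to obtain $s_i, u_i \in \mathbb{R}$ with $g_ie([s_i,+\infty)) \subseteq g_ne(\mathbb{R})$ and $g_ne([u_i,+\infty)) \subseteq g_ie(\mathbb{R})$. I would then set $t_i := \max(t_i', s_i)$ for $i<n$ and $t_n := \max_{i<n} u_i$. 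A direct check gives $g_ie([t_i,+\infty)) \subseteq \mu' \cap g_ne(\mathbb{R}) = \mu$ for all $i \leqslant n$, so in particular $\mu \neq \emptyset$. There is no genuine obstacle here; the only care needed is in managing two constraints per $g_i$ (one from the inductive hypothesis, one from the new pairwise intersection with $g_ne(\mathbb{R})$), which is handled by taking a finite maximum of real numbers.
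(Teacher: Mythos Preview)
Your argument is correct. The paper does not write out a proof of this Fact at all---it simply states it after the pairwise dichotomy with the word ``Thus,''---so your induction on $n$, which packages that dichotomy for arbitrary pairs $g_ie, g_je$ and then takes finite maxima of the resulting thresholds, is exactly the natural way to fill in what the paper leaves implicit.
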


\begin{defn}\rm\label{d}
	Let $g \in G$.
	Let $\mu = e(\mathbb{R}) \cap ge(\mathbb{R})$.
	
	(a)
	We fix a homeomorphism $g_0: e(\mathbb{R}) \to ge(\mathbb{R})$ such that
	the restriction of $g_0$ to $\mu$ is identity.
	
	(b)
	Let $$g_* = e^{-1}g^{-1} _{0}ge: 
	\mathbb{R} \stackrel{e}{\to} e(\mathbb{R}) \stackrel{g}{\to} ge(\mathbb{R}) 
	\stackrel{g^{-1}_{0}}{\to} e(\mathbb{R}) \stackrel{e^{-1}}{\to} \mathbb{R}.$$
	We define $d(g) = [g_*] \in \mathcal{G}_\infty$.
\end{defn}

\begin{lm}
	For each $g \in G$, 
	$d(g)$ is independent of the choice of $g_0$.
\end{lm}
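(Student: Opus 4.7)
The plan is to show that for two choices $g_0$ and $g_0'$, the resulting maps $g_*$ and $g_*'$ agree on some tail $[n,+\infty)$, which by Definition \ref{G-infty} is exactly what is needed to conclude $[g_*] = [g_*']$ in $\mathcal{G}_\infty$.

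First I would invoke the dichotomy noted just before Fact \ref{t exists}: either $e(\mathbb{R}) = ge(\mathbb{R})$ (so $\mu = e(\mathbb{R}) = ge(\mathbb{R})$), or $\mu = e(J) = ge(K)$ for some positive rays $J = (t,+\infty)$ and $K = (r,+\infty)$. In either case there exists $r \in \mathbb{R}$ such that $ge([r,+\infty)) \subseteq \mu$; in the first case one may take any $r$, and in the second case one takes the $r$ provided by the identification $\mu = ge((r,+\infty))$.

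Next, I would use the hypothesis that both $g_0$ and $g_0'$ restrict to the identity on $\mu$. Since a homeomorphism that fixes a set pointwise also has its inverse fixing that set pointwise, $g_0^{-1}$ and $(g_0')^{-1}$ both act as the identity on $\mu$. Consequently, for every $x \in [r,+\infty)$, the point $ge(x)$ lies in $\mu$, so
\[
g_0^{-1}(ge(x)) \;=\; ge(x) \;=\; (g_0')^{-1}(ge(x)),
\]
which gives $g_*(x) = e^{-1}(ge(x)) = g_*'(x)$ on $[r,+\infty)$. By the equivalence relation defining $\mathcal{G}_\infty$, we conclude $[g_*] = [g_*']$.

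I do not anticipate a real obstacle here: once the dichotomy for $\mu$ is in hand, the argument is a direct computation, since $\mathcal{G}_\infty$ is designed precisely to forget the behavior of a homeomorphism outside a positive tail. The only minor care needed is to handle uniformly the two cases of the dichotomy and to observe that $g_0^{-1}$ inherits the identity behavior on $\mu$ from $g_0$.
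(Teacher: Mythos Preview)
Your proposal is correct and follows essentially the same route as the paper: you find a tail $[r,+\infty)$ with $ge([r,+\infty)) \subseteq \mu$, observe that $g_0^{-1}$ is the identity there, and conclude that $g_*$ agrees with $e^{-1}ge$ on that tail regardless of $g_0$. The only cosmetic differences are that the paper cites Fact~\ref{t exists} directly rather than re-running the dichotomy, and it phrases the conclusion as $g_*\mid_J = (e^{-1}ge)\mid_J$ (a formula manifestly independent of $g_0$) instead of comparing two choices $g_0,g_0'$.
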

\begin{proof}
	Let $\mu = e(\mathbb{R}) \cap ge(\mathbb{R})$.
	By Fact \ref{t exists},
	there is $J = [n,+\infty)$ for some $n \in \mathbb{R}$ such that
	$e(J), ge(J) \subseteq \mu$.
	Then the restriction of $g^{-1}_{0}$ to $ge(J)$ is identity.
	Thus,
	$$g_* \mid_J = e^{-1}g^{-1} _{0}(ge\mid_J )= e^{-1} \cdot 1 \cdot (ge\mid_J) =
	(e^{-1}ge)\mid_J.$$
	So $g_* \mid_J$ is independent of the choice of $g_0$.
\end{proof}

In the following,
we prove that $d: G \to \mathcal{G}_\infty$ is a homomorphism.

\begin{lm}\label{homomorphism}
	Let $f,g \in G$.
	Then $d(fg) = d(f)d(g)$.
\end{lm}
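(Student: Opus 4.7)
The plan is to exhibit a ray $J = [N, +\infty)$ on which $(fg)_*$ and $f_* g_*$ agree pointwise. By Definition \ref{G-infty}, this immediately gives $d(fg) = [(fg)_*] = [f_* g_*] = [f_*] \cdot [g_*] = d(f) d(g)$.

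First, I would apply Fact \ref{t exists} to the finite subset $\{1, g, f, fg\} \subseteq G$ to obtain $N \in \mathbb{R}$ such that, writing $J = [N, +\infty)$, the four images $e(J), ge(J), fe(J), fge(J)$ all lie inside
\[
\mu := e(\mathbb{R}) \cap ge(\mathbb{R}) \cap fe(\mathbb{R}) \cap fge(\mathbb{R}).
\]
The reason for intersecting all four sets at once is that $\mu$ is simultaneously contained in each of the pairwise intersections $e(\mathbb{R}) \cap ge(\mathbb{R})$, $e(\mathbb{R}) \cap fe(\mathbb{R})$, and $e(\mathbb{R}) \cap fge(\mathbb{R})$, so that on $J$ each of the auxiliary homeomorphisms $g_0^{-1}$, $f_0^{-1}$, $(fg)_0^{-1}$ can be discarded wherever it appears.

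Next, the proof of the preceding lemma already shows that $g_*|_J = (e^{-1}ge)|_J$ and $(fg)_*|_J = (e^{-1}fge)|_J$. For $x \in J$, I would then unwind
\[
f_* g_*(x) \;=\; e^{-1} f_0^{-1} f e(g_*(x)) \;=\; e^{-1} f_0^{-1} f(ge(x)) \;=\; e^{-1} f_0^{-1}(fge(x)),
\]
and use the fact that $fge(x) \in \mu \subseteq e(\mathbb{R}) \cap fe(\mathbb{R})$ to conclude that $f_0^{-1}$ fixes $fge(x)$. The expression then collapses to $e^{-1}fge(x) = (fg)_*(x)$, as desired.

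The main obstacle, if one can call it that, is simply bookkeeping: one must include enough embeddings in the intersection defining $\mu$ so that \emph{all three} auxiliary homeomorphisms $g_0^{-1}, f_0^{-1}, (fg)_0^{-1}$ can be discarded on the same ray $J$. Once that is arranged, the rest is a routine substitution and there is no topological or analytic content beyond it.
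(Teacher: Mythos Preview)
Your proposal is correct and follows essentially the same route as the paper: intersect the four images $e(\mathbb{R}),\,ge(\mathbb{R}),\,fe(\mathbb{R}),\,fge(\mathbb{R})$, choose a common ray $J$ on which $g_0^{-1},\,f_0^{-1},\,(fg)_0^{-1}$ all act as the identity, and verify that both $(fg)_*$ and $f_*g_*$ reduce to $e^{-1}fge$ on $J$. The only cosmetic difference is that you cite the preceding lemma for $g_*|_J=(e^{-1}ge)|_J$ and $(fg)_*|_J=(e^{-1}fge)|_J$, whereas the paper repeats those computations inline.
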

\begin{proof}
	We have 
	\begin{align}
		f_*g_* &
		= (e^{-1}f^{-1}_{0} fe)(e^{-1}g^{-1}_{0}ge) \nonumber \\&
		= e^{-1}f^{-1}_{0}fg^{-1}_{0}ge, \nonumber
	\end{align}
	and
	$$(fg)_* = e^{-1}((fg)_0)^{-1}fge.$$
	Let $\mu = e(\mathbb{R}) \cap fe(\mathbb{R}) \cap ge(\mathbb{R}) \cap fge(\mathbb{R})$.
	By Fact \ref{t exists},
	there is $J = [t,+\infty)$ for some sufficiently large $t \in \mathbb{R}$ such that
	$$e(J), fe(J), ge(J),fge(J) \subseteq \mu.$$
	
	Notice that the restriction of $g^{-1}_{0}$ to $ge(J)$ is identity,
	since
	$$ge(J) \subseteq \mu \subseteq e(\mathbb{R}) \cap ge(\mathbb{R}).$$
	And the restriction of $f^{-1}_{0}$ to $fge(J)$ is identity,
	since
	$$fge(J) \subseteq \mu \subseteq e(\mathbb{R}) \cap fe(\mathbb{R}).$$
	So
	\begin{align}
		(f_*g_*) \mid_J &
		= e^{-1}f^{-1}_{0} fg^{-1}_{0}(ge\mid_J) \nonumber \\&
		= e^{-1}f^{-1}_{0}f \cdot 1 \cdot (ge\mid_J) \nonumber \\&
	    = e^{-1}f^{-1}_{0}(fge\mid_J) \nonumber \\&
		= e^{-1} \cdot 1 \cdot (fge\mid_J) \nonumber \\&
		= (e^{-1}fge) \mid_J. \nonumber
	\end{align}
	
	Also,
	the restriction of $((fg)_0)^{-1}$ to $fge(J) \subseteq \mu$ is identity,
	since 
	$$fge(J) \subseteq \mu \subseteq e(\mathbb{R}) \cap fge(\mathbb{R}).$$
	So
	\begin{align}
	(fg)_* \mid_J &
	= e^{-1}((fg)_0)^{-1}(fge\mid_J) \nonumber \\&
	= e^{-1} \cdot 1 \cdot (fge\mid_J) \nonumber \\&
	= (e^{-1}fge) \mid_J. \nonumber
   \end{align}
   It follows that
   $$(f_*g_*) \mid_J = (e^{-1}fge) \mid_J = (fg)_* \mid_J.$$
   By Definition \ref{G-infty},
   we have
   $$d(fg) = [(fg)_*] =[f_*][g_*] = d(f)d(g).$$
\end{proof}

To complete the proof of Proposition \ref{1-manifold},
it remains to show that

\begin{lm}\label{nontrivial}
	Let $h \in G$.
	Assume that for arbitrary $n \in \mathbb{R}$,
	there is $m \in (n,+\infty)$ with $he(m) \ne e(m)$.
	Then $d(h) \ne 1$.
\end{lm}
\begin{proof}
	Let $\mu = e(\mathbb{R}) \cap he(\mathbb{R})$.
	By Fact \ref{t exists},
	there is $J = [t,+\infty)$ for some $t \in \mathbb{R}$ such that
	$e(J),he(J) \subseteq \mu$.
	By Definition \ref{d},
	$d(h) = [e^{-1}h^{-1} _{0}he]$.
	Let $K = [r,+\infty)$ for some $r \in (t,+\infty)$.
	By our assumption,
	there is $m \in K$ such that
	$he(m) \ne e(m)$.
	Since $m \geqslant r > t$,
	$he(m)$ is contained in $\mu$,
	and thus $h^{-1} _{0}he(m) = he(m)$.
	Therefore,
	$$e^{-1}h^{-1} _{0}he(m) = e^{-1}he(m) \ne e^{-1}(e(m)) = m.$$
	So $e^{-1}h^{-1} _{0}he \nsim id$
	(where $\sim$ denotes the equivalence relation defined in Definition \ref{G-infty}).
	Therefore,
	$d(h) \ne 1$.
\end{proof}

Proposition \ref{1-manifold} now follows from 
Lemma \ref{homomorphism} and Lemma \ref{nontrivial}.

\subsection{The proof of Theorem \ref{foliation}}

Let $M$ be a connected, orientable, irreducible $3$-manifold.
Suppose that $M$ admits a co-oriented taut foliation $\mathcal{F}$ which has one-sided branching.
Let $G = \pi_1(M)$,
and let $p: \widetilde{M} \to M$ be the universal covering of $M$.

In this subsection,
we prove that

\begin{foliation}
	$G$ is left orderable.
\end{foliation}

Since $\mathcal{F}$ has one-sided branching,
$L(\mathcal{F})$ is a non-Hausdorff $1$-manifold with one-sided branching.
We may assume that $L(\mathcal{F})$ has an orientation induced from
the co-orientation on $\mathcal{F}$,
and that $L(\mathcal{F})$ has branching in the negative direction.
In the following,
for every $g \in G$ and $t \in L(\mathcal{F})$,
$g(t)$ will always denote the image of $t$ under 
the transformation $g: L(\mathcal{F}) \to L(\mathcal{F})$ given by
the $\pi_1$-action on $L(\mathcal{F})$.
And we will not distinguish the leaves of $\widetilde{\mathcal{F}}$ and
the points in $L(\mathcal{F})$.

    We first give a quick sketch of the proof of Theorem \ref{foliation} in this paragraph.
	We blow-up some leaf $\lambda$ of $\mathcal{F}$ to obtain 
	a new foliation $\mathcal{F}_0$ with one-sided branching.
	Then we construct an action
	$\{\alpha_g: L(\mathcal{F}_0) \to L(\mathcal{F}_0) \mid g \in G\}$ of $G$ on $L(\mathcal{F}_0)$
	such that
	some points in $L(\mathcal{F}_0)$ have trivial stabilizer.
	Considering the action $\{\alpha_g \mid g \in G\}$ on $L(\mathcal{F}_0)$ and
	choosing some positively oriented proper embedding 
	$e: \mathbb{R} \to L(\mathcal{F}_0)$ as in Subsection \ref{subsection 3.1},
	we can obtain an injective homomorphism $G \to \mathcal{G}_\infty$ by
	Proposition \ref{1-manifold}.

Now we give the details of the proof.
Let $\lambda$ be a leaf of $\mathcal{F}$, 
and
let $\widetilde{\lambda}$ be a component of $p^{-1}(\lambda)$.

\begin{fact}\rm\label{dense}
	Let $\rho: \mathbb{R} \to L$ be an arbitrary positively oriented proper embedding
	(then $[\rho_+]$ is the positive end of $L(\mathcal{F})$).
	Let $J = (n,+\infty)$ for some $n \in \mathbb{R}$.
	Then there is $g \in G$ such that
	$g(\widetilde{\lambda}) \in \rho(J)$.
\end{fact}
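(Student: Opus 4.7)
\textbf{Proof plan for Fact \ref{dense}.}
The strategy is to exploit the density of every leaf of $\mathcal{F}$ in $M$ (which was arranged just before the statement via [C1, Theorem 2.2.7]) to find \emph{some} component of $p^{-1}(\lambda)$ whose corresponding point of $L(\mathcal{F})$ lies in $\rho(J)$, and then transport $\widetilde{\lambda}$ to that component by a deck transformation.

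First I would set $V = \rho(J)$ and check that $V$ is open in $L(\mathcal{F})$: since $\rho$ is an embedding of $\mathbb{R}$ into the $1$-manifold $L(\mathcal{F})$, the image $\rho(\mathbb{R})$ is an open $1$-dimensional submanifold, so $\rho(J)$ is open in $\rho(\mathbb{R})$ and hence in $L(\mathcal{F})$. Letting $q: \widetilde{M} \to L(\mathcal{F})$ denote the leaf-space projection, the preimage $U = q^{-1}(V)$ is an open, $\widetilde{\mathcal{F}}$-saturated subset of $\widetilde{M}$ (saturated because $V$ is a union of whole leaves). Since $p$ is a covering map it is an open map, so $p(U)$ is a nonempty open subset of $M$.

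Next, because $\lambda$ is dense in $M$, we have $\lambda \cap p(U) \neq \emptyset$. I would pick $x \in \lambda \cap p(U)$ and a lift $\widetilde{x} \in U$, and let $\widetilde{\mu}$ be the leaf of $\widetilde{\mathcal{F}}$ through $\widetilde{x}$. By saturation of $U$, $\widetilde{\mu} \subseteq U$, so viewed as a point of $L(\mathcal{F})$ we have $\widetilde{\mu} \in V = \rho(J)$. Moreover $p(\widetilde{\mu}) = \lambda$, so $\widetilde{\mu}$ is a component of $p^{-1}(\lambda)$. Since $p|_{\widetilde{\lambda}}: \widetilde{\lambda} \to \lambda$ is a covering map onto the connected leaf $\lambda$, it is surjective, so I can choose $\widetilde{y} \in \widetilde{\lambda}$ with $p(\widetilde{y}) = x$. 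The unique deck transformation $g \in G$ with $g(\widetilde{y}) = \widetilde{x}$ then satisfies $g(\widetilde{\lambda}) = \widetilde{\mu} \in \rho(J)$, as required.

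The argument is largely topological bookkeeping, and I expect the only mildly delicate point to be verifying that $\rho(J)$ is open in the possibly non-Hausdorff space $L(\mathcal{F})$; once this is in hand, the real content is simply the invocation of density. Note that the proof uses neither the one-sided branching hypothesis nor the specific requirement that $\rho(\pm\infty)$ be particular ends of $L(\mathcal{F})$; those assumptions are presumably needed only later, when this fact is fed into Proposition \ref{1-manifold}.
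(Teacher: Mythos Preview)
Your proposal is correct and follows essentially the same approach as the paper. The paper's proof is a compressed version of yours: it asserts in one line that density of $\lambda$ in $M$ makes $\bigcup_{g\in G} g(\widetilde{\lambda})$ dense in $L(\mathcal{F})$, notes that $\rho(J)$ is open, and concludes; your argument unpacks precisely that implication by passing through the open saturated set $U = q^{-1}(\rho(J))$, its open image $p(U)$ in $M$, and then translating via a deck transformation.
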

\begin{proof}
	Let $\widetilde{l} = \rho(m)$ for some $m > n$, 
	and let $l = p(\widetilde{l})$.
	Since $\mathcal{F}$ is taut,
	there is a path $\gamma$ in $M$ such that:
	$\gamma$ starts and ends at the same point in the leaf $l$,
	$\gamma$ intersects every leaf of $\mathcal{F}$,
	and $Int(\gamma)$ is a positively oriented transversal of $\mathcal{F}$.
	Let $\widetilde{\gamma}$ be a lift of $\gamma$ in $\widetilde{M}$ that
	starts at some point in the leaf $\widetilde{l}$,
	and we may consider $\widetilde{\gamma}$ as a path in $L$ that starts at the point $\widetilde{l}$.
	Notice that $\widetilde{\gamma}$ is positively oriented in $L$ and
	$L$ has branching in the negative direction,
	we have $\widetilde{\gamma} \subseteq \rho([m,+\infty)) \subseteq \rho(J)$.
	Since $\gamma$ has nonempty intersection with $\lambda$,
	there is $g \in G$ such that
	$g(\widetilde{\lambda}) \in \widetilde{\gamma}$.
	If follows that $g(\widetilde{\lambda}) \in \rho(J)$.
\end{proof}

We blow-up the leaf $\lambda$ of $\mathcal{F}$ to
obtain a new foliation $\mathcal{F}_0$ of $M$.
Then $L(\mathcal{F}_0)$ is obtained from
blowing-up $\{g(\widetilde{\lambda}) \mid g \in G\}$ in $L(\mathcal{F})$,
and thus $L(\mathcal{F}_0)$ is still a non-Hausdorff $1$-manifold with one-sided branching.
We assume that 
$L(\mathcal{F}_0)$ has an orientation induced from the orientation on
$L(\mathcal{F})$.
Now for every $g \in G$,
the point $g(\widetilde{\lambda}) \in L(\mathcal{F})$ is replaced by
an interval $g(\widetilde{\lambda}) \times I$.

Let $K = \{g \in G \mid g(\widetilde{\lambda}) = \widetilde{\lambda}\}$.
By Novikov's theorem (see for example \hyperref[C2]{[C2, Theorem 4.35]}), 
$\lambda$ is incompressible in $M$,
and thus the restriction of $p$ to $\widetilde{\lambda}$,
$p \mid_{\widetilde{\lambda}}: \widetilde{\lambda} \to \lambda$,
is a universal covering of $\lambda$.
As $K$ is the deck transformation group of the universal covering 
$p \mid_{\widetilde{\lambda}}: \widetilde{\lambda} \to \lambda$,
$K$ is isomorphic to $\pi_1(\lambda)$.
Since $\lambda$ is an orientable surface,
$K$ is a countable left orderable group or a trivial group (when $\lambda$ is a $2$-plane).
So there is an action $\phi: K \to \text{Homeo}_+(I)$ of $K$ on $I$ such that:
$\phi(g)(\frac{1}{2}) \ne \frac{1}{2}$ for every $g \in K - \{1\}$
(see \hyperref[C2]{[C2, Lemma 2.43, Remark]}).
Here,
we set $\phi$ to be the trivial homomorphism when $K$ is a trivial group.
For each left coset $gK$ ($g \in G$) of $K$ in $G$,
we fix an element $x_{gK} \in gK$.
And we set $x_K = 1 \in K$.

\begin{construction}\rm\label{alpha}
	For each $h \in G$,
	we define a map $\alpha_h: L(\mathcal{F}_0) \to L(\mathcal{F}_0)$ as follows:
	
	$\bullet$
	Suppose that $q \in L(\mathcal{F}_0) - \bigcup_{g \in G}(g(\widetilde{\lambda}) \times I)$.
	Then $q$ can be canonically identified with a point of $L(\mathcal{F})$
	(which we also denote by $q$).
	We define $\alpha_h(q) = h(q)$.
	
	$\bullet$
	Suppose that $q \in \bigcup_{g \in G}(g(\widetilde{\lambda}) \times I)$.
	Then there are $g \in G$, $t \in I$ such that
	$q = g(\widetilde{\lambda}) \times \{t\}$.
	We define
	$$\alpha_h(g(\widetilde{\lambda}) \times \{t\}) = 
	hg(\widetilde{\lambda}) \times \{\phi(x^{-1}_{hgK}hx_{gK})(t)\}.$$
\end{construction}

Since $x^{-1}_{hgK}hx_{gK} \in K$,
the map $\alpha_h$ is well-defined.
Notice that $\alpha_h$ takes $g(\widetilde{\lambda}) \times I$ to
$hg(\widetilde{\lambda}) \times I$ for every $g \in G$.
So $\alpha_h$ is an orientation-preserving homeomorphism.
Furthermore,

\begin{lm}
	$\{\alpha_g: L(\mathcal{F}_0) \to L(\mathcal{F}_0) \mid g \in G\}$ is
	an action of $G$ on $L(\mathcal{F}_0)$.
\end{lm}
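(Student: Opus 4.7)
The plan is to verify the two group-action axioms for $\{\alpha_g\}_{g \in G}$: that $\alpha_1 = \mathrm{id}_{L(\mathcal{F}_0)}$ and that $\alpha_{fg} = \alpha_f \circ \alpha_g$ for all $f, g \in G$. By Construction \ref{alpha}, $\alpha_h$ is defined piecewise on the complement of the blown-up intervals and on the intervals themselves, so I would check each axiom separately on the two pieces. Since both pieces are preserved by every $\alpha_h$ (as noted, $\alpha_h$ sends $\{g(\widetilde{\lambda})\}\times I$ to $\{hg(\widetilde{\lambda})\}\times I$), the verifications decouple cleanly.

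On the complement of $\bigcup_{g \in G}(\{g(\widetilde{\lambda})\}\times I)$, the map $\alpha_h$ agrees with the $\pi_1$-action of $h$ on $L(\mathcal{F})$, so the two axioms follow immediately from the fact that the $\pi_1$-action on $L(\mathcal{F})$ is a genuine left action. The substantive check is on the blown-up intervals. Here I would pick a point $\{g(\widetilde{\lambda})\}\times\{t\}$ and compute
\begin{align*}
\alpha_f\bigl(\alpha_g(\{h(\widetilde{\lambda})\}\times\{t\})\bigr)
&= \alpha_f\bigl(\{gh(\widetilde{\lambda})\}\times\{\phi(x^{-1}_{ghK}\,g\,x_{hK})(t)\}\bigr) \\
&= \{fgh(\widetilde{\lambda})\}\times\bigl\{\phi(x^{-1}_{fghK}\,f\,x_{ghK})\circ\phi(x^{-1}_{ghK}\,g\,x_{hK})(t)\bigr\}.
\end{align*}
Using that $\phi: K \to \mathrm{Homeo}_+(I)$ is a homomorphism, the composition of the two $\phi$-images collapses to $\phi(x^{-1}_{fghK}\,f\,x_{ghK}\,x^{-1}_{ghK}\,g\,x_{hK}) = \phi(x^{-1}_{fghK}\,fg\,x_{hK})$, which is exactly the $I$-coordinate appearing in $\alpha_{fg}(\{h(\widetilde{\lambda})\}\times\{t\})$. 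This gives $\alpha_{fg} = \alpha_f \circ \alpha_g$.

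For the identity axiom, applied to $\{g(\widetilde{\lambda})\}\times\{t\}$, the formula yields $\phi(x^{-1}_{gK}\cdot 1\cdot x_{gK})(t) = \phi(1_K)(t) = t$, so $\alpha_1$ is the identity. I would note one subtle point to justify carefully: the element $x^{-1}_{hgK}\,h\,x_{gK}$ lies in $K$ because $h x_{gK} \in hgK = x_{hgK}K$, and this is what makes each individual $\alpha_h$ well defined in the first place; the algebraic identity $x^{-1}_{fghK}\,f\,x_{ghK}\cdot x^{-1}_{ghK}\,g\,x_{hK} = x^{-1}_{fghK}\,fg\,x_{hK}$ is the mechanical telescoping that makes everything fit together.

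There is no real obstacle here — this is essentially a bookkeeping lemma, and the only potential pitfall is making sure the coset representatives $x_{gK}$ telescope correctly, which they do because the formula was designed so that each $x_{gK}$ that is inserted is immediately canceled by $x^{-1}_{gK}$ on the other side. The hardest part is just keeping the notation straight.
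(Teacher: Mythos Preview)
Your proposal is correct and follows essentially the same route as the paper: both arguments split into the trivial check on the complement of the blown-up intervals and the computation on a point $\{g(\widetilde{\lambda})\}\times\{t\}$, where the telescoping identity $x^{-1}_{fghK}\,f\,x_{ghK}\cdot x^{-1}_{ghK}\,g\,x_{hK} = x^{-1}_{fghK}\,fg\,x_{hK}$ (together with $\phi$ being a homomorphism) gives $\alpha_{fg}=\alpha_f\alpha_g$, and $\alpha_1=\mathrm{id}$ is immediate. The only cosmetic slip is that you announce picking a point indexed by $g$ but then compute with one indexed by $h$; otherwise your write-up matches the paper's almost line for line.
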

\begin{proof}
	Let $h,r \in G$.
	It's clear that 
	$\alpha_{hr}(q) = \alpha_h\alpha_r(q)$ for every 
	$q \in L(\mathcal{F}_0) - \bigcup_{g \in G}(g(\widetilde{\lambda}) \times I)$.
	Now we choose
	$q \in \bigcup_{g \in G}(g(\widetilde{\lambda}) \times I)$.
	Let $g \in G$, $t \in I$ for which
	$q = g(\widetilde{\lambda}) \times \{t\}$.
	We have 
	\begin{align}
		\alpha_{hr}(q) &
		= \alpha_{hr}(g(\widetilde{\lambda}) \times \{t\}) \nonumber \\&
		= hrg(\widetilde{\lambda}) \times \{\phi(x^{-1}_{hrgK}hrx_{gK})(t)\} \nonumber
	\end{align}
and
\begin{align}
	\alpha_h\alpha_r(q) &
	= \alpha_h\alpha_r(g(\widetilde{\lambda}) \times \{t\}) \nonumber \\&
	= \alpha_h(rg(\widetilde{\lambda}) \times \{\phi(x^{-1}_{rgK}rx_{gK})(t)\}) \nonumber \\&
	= hrg(\widetilde{\lambda}) \times \{\phi(x^{-1}_{hrgK}hx_{rgK}x^{-1}_{rgK}rx_{gK})(t)\} \nonumber \\&
	= hrg(\widetilde{\lambda}) \times \{\phi(x^{-1}_{hrgK}hrx_{gK})(t)\}. \nonumber
\end{align}
Thus
$\alpha_{hr}(q) = \alpha_h\alpha_r(q)$.
Also,
we have
$\alpha_1(q) = q$ for every $q \in L(\mathcal{F}_0)$.
Therefore,
$\{\alpha_g: L(\mathcal{F}_0) \to L(\mathcal{F}_0) \mid g \in G\}$ is
an action of $G$ on $L(\mathcal{F}_0)$.
\end{proof}

\begin{lm}
	The point $\widetilde{\lambda} \times \{\frac{1}{2}\}$ has
	trivial stabilizer under $\{\alpha_g: L(\mathcal{F}_0) \to L(\mathcal{F}_0) \mid g \in G\}$.
\end{lm}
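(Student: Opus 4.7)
The plan is to unwind Construction \ref{alpha} at the specific point $q_0 = \{\widetilde{\lambda}\} \times \{\tfrac{1}{2}\}$, which corresponds to taking $g = 1$ (so that $g(\widetilde{\lambda}) = \widetilde{\lambda}$) and $t = \tfrac{1}{2}$. Note that in this case the relevant coset is $gK = K$, whose chosen representative is $x_K = 1$. Suppose $h \in G$ satisfies $\alpha_h(q_0) = q_0$; the goal is to show $h = 1$.

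Applying the definition of $\alpha_h$ in the second bullet of Construction \ref{alpha} gives
$$\alpha_h(q_0) = \{h(\widetilde{\lambda})\} \times \{\phi(x^{-1}_{hK} \, h \, x_K)(\tfrac{1}{2})\} = \{h(\widetilde{\lambda})\} \times \{\phi(x^{-1}_{hK} h)(\tfrac{1}{2})\}.$$
For this to equal $q_0 = \{\widetilde{\lambda}\} \times \{\tfrac{1}{2}\}$, the first coordinate forces $h(\widetilde{\lambda}) = \widetilde{\lambda}$, which by definition of $K$ means $h \in K$. But then $hK = K$, so $x_{hK} = x_K = 1$, and the second coordinate reduces to the requirement $\phi(h)(\tfrac{1}{2}) = \tfrac{1}{2}$.

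Now I would invoke the defining property of $\phi$ fixed just before Construction \ref{alpha}: the action $\phi \colon K \to \mathrm{Homeo}_+(I)$ was arranged (via \hyperref[C2]{[C2, Lemma 2.43, Remark]}, using that $K$ is countable left orderable or trivial) precisely so that $\phi(g)(\tfrac{1}{2}) \ne \tfrac{1}{2}$ for every nontrivial $g \in K$. Therefore $\phi(h)(\tfrac{1}{2}) = \tfrac{1}{2}$ forces $h = 1$, proving that the stabilizer of $q_0$ is trivial.

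There is essentially no obstacle here beyond bookkeeping; the argument is a direct verification once one observes that the two independent degrees of freedom in $\alpha_h(q_0)$ — the underlying leaf $h(\widetilde{\lambda})$ and the fiber coordinate $\phi(x^{-1}_{hK}h)(\tfrac{1}{2})$ — are pinned down successively: the first constraint places $h$ in $K$, which collapses the coset representatives to $1$, and the second constraint is then exactly the condition under which $\phi$ was chosen to detect nontriviality of $h$.
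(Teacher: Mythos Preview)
Your proof is correct and is essentially the same as the paper's: both compute $\alpha_h(\{\widetilde{\lambda}\}\times\{\tfrac12\})$ using $x_K=1$, use the first coordinate to force $h\in K$ (whence $x_{hK}=1$), and then use the defining property of $\phi$ on the second coordinate to force $h=1$. The only cosmetic difference is that the paper argues by contrapositive (starting from $g\in G-\{1\}$) while you argue directly (starting from $\alpha_h(q_0)=q_0$).
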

\begin{proof}
	Let $g \in G - \{1\}$.
	We have $$\alpha_g(\widetilde{\lambda} \times \{\frac{1}{2}\}) = 
	g(\widetilde{\lambda}) \times \{\phi(x^{-1}_{gK}gx_{K})(\frac{1}{2})\} =
	g(\widetilde{\lambda}) \times \{\phi(x^{-1}_{gK}g)(\frac{1}{2})\}.$$
	If $g \notin K$,
	then $g(\widetilde{\lambda}) \ne \widetilde{\lambda}$.
	If $g \in K - \{1\}$,
	then $\phi(x^{-1}_{gK}g)(\frac{1}{2}) = \phi(g)(\frac{1}{2}) \ne \frac{1}{2}$.
	So $\alpha_g(\widetilde{\lambda} \times \{\frac{1}{2}\}) \ne \widetilde{\lambda} \times \{\frac{1}{2}\}$
	for every $g \in G - \{1\}$.
\end{proof}

Let $\widetilde{\lambda_0} = \widetilde{\lambda} \times \{\frac{1}{2}\} \in L(\mathcal{F}_0)$.

\begin{defn}\rm\label{e}
Let $e: \mathbb{R} \to L(\mathcal{F}_0)$ be an arbitrary positively oriented proper embedding.
\end{defn}

\begin{lm}\label{limit}
	Let $J = (n,+\infty)$ for some $n \in \mathbb{R}$.
	Then there is $h \in G$ such that $\alpha_h(\widetilde{\lambda_0}) \in e(J)$.
\end{lm}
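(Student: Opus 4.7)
The plan is to reduce Lemma \ref{limit} to Fact \ref{dense} by pushing the picture from $L(\mathcal{F}_0)$ down to $L(\mathcal{F})$ along the natural collapse map $c: L(\mathcal{F}_0) \to L(\mathcal{F})$, which sends each blown-up interval $\{g(\widetilde{\lambda})\} \times I$ to the single point $g(\widetilde{\lambda})$ and is the identity elsewhere. The geometric observation that powers everything is this: whenever $g(\widetilde{\lambda}) \in c(e(\mathbb{R}))$, the whole blown-up interval $\{g(\widetilde{\lambda})\} \times I = c^{-1}(g(\widetilde{\lambda}))$ is contained in $e(\mathbb{R})$. Indeed, this interval is a closed arc in $L(\mathcal{F}_0)$ whose two endpoints are interior points of $L(\mathcal{F}_0)$ (not ends), and the embedded line $e(\mathbb{R})$, having no interior endpoints, must traverse the whole arc once it meets it. Consequently, to prove the lemma it suffices to find $h \in G$ for which $\{h(\widetilde{\lambda})\} \times I \subseteq e(J)$, since this interval contains the point $\alpha_h(\widetilde{\lambda_0})$.

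To find such an $h$, I would next factor $c \circ e: \mathbb{R} \to L(\mathcal{F})$ as $\rho \circ q$, where $\rho: \mathbb{R} \to L(\mathcal{F})$ is an embedding with image $c(e(\mathbb{R}))$ and $q: \mathbb{R} \to \mathbb{R}$ is a continuous, weakly monotone surjection which collapses each preimage $e^{-1}(\{g(\widetilde{\lambda})\} \times I)$ to a point. Because $e$ is orientation-preserving with $e(+\infty)$ the positive end of $L(\mathcal{F}_0)$ and $e(-\infty)$ a negative end, and because $c$ identifies corresponding ends of $L(\mathcal{F}_0)$ and $L(\mathcal{F})$, the embedding $\rho$ inherits exactly the end conditions required by Fact \ref{dense}: $\rho(+\infty)$ is the positive end of $L(\mathcal{F})$ and $\rho(-\infty)$ is a negative end of $L(\mathcal{F})$.

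With $\rho$ and $q$ in place, I would choose $n_0 \in \mathbb{R}$ large enough that $q^{-1}((n_0, +\infty)) \subseteq J = (n, +\infty)$, which is possible because $q$ is weakly monotone and surjective. The first-paragraph observation then upgrades this to $c^{-1}(\rho((n_0, +\infty))) \subseteq e(J)$: each blown-up interval over a point of $\rho((n_0, +\infty))$ lies inside $e(\mathbb{R})$, and its $e$-preimage falls inside $J$ by choice of $n_0$. Applying Fact \ref{dense} to $\rho$ and the ray $(n_0, +\infty)$ then yields $h \in G$ with $h(\widetilde{\lambda}) \in \rho((n_0, +\infty))$, whence $\{h(\widetilde{\lambda})\} \times I = c^{-1}(h(\widetilde{\lambda})) \subseteq e(J)$, and in particular $\alpha_h(\widetilde{\lambda_0}) \in e(J)$.

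The step I expect to need to defend most carefully is the factorization $c \circ e = \rho \circ q$, i.e. the assertion that the monotone continuous image $c(e(\mathbb{R}))$ really is an embedded copy of $\mathbb{R}$ in the non-Hausdorff $1$-manifold $L(\mathcal{F})$. The one-sided branching of $L(\mathcal{F})$ (in particular the uniqueness of its positive end), together with the orientation compatibility of $e$ and $c$, should prevent the image from self-intersecting or doubling back despite the ambient pathologies, but making this rigorous is the most delicate point.
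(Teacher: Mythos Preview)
Your approach is correct and is essentially the same as the paper's: the paper's one-sentence proof invokes exactly the collapse from $L(\mathcal{F}_0)$ to $L(\mathcal{F})$, Fact \ref{dense}, and the observation that each blown-up interval $\{g(\widetilde{\lambda})\}\times I$ contains an $\alpha$-image of $\widetilde{\lambda_0}$. You have simply unpacked these steps explicitly via the maps $c$, $\rho$, and $q$, and you have correctly flagged the factorization $c\circ e=\rho\circ q$ as the one point that requires justification (the paper leaves this implicit).
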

\begin{proof}
	This follows from Fact \ref{dense} and
	the fact that $L(\mathcal{F}_0)$ is obtained from
	blowing-up $\{g(\widetilde{\lambda}) \mid g \in G\}$ in $L(\mathcal{F})$
	(and every interval $g(\widetilde{\lambda}) \times I$ ($g \in G$)
	contains some images of $\widetilde{\lambda_0}$ under $\{\alpha_h \mid h \in G\}$).
\end{proof}

It follows that

\begin{cor}\label{trivial stabilizer}
	For every $n \in \mathbb{R}$,
	there is $m \in (n,+\infty)$ such that $\alpha_ge(m) \ne e(m)$ for all $g \in G - \{1\}$.
\end{cor}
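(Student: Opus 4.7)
The plan is to deduce this as a one-line consequence of Lemma \ref{limit} together with the fact that $\widetilde{\lambda_0}$ has trivial stabilizer under the action $\{\alpha_g \mid g \in G\}$. The key observation is that trivial stabilizer is preserved along orbits: if $\alpha_h(\widetilde{\lambda_0}) = e(m)$ for some $h \in G$, then any $g \in G - \{1\}$ fixing $e(m)$ would give $\alpha_{h^{-1} g h}(\widetilde{\lambda_0}) = \widetilde{\lambda_0}$ with $h^{-1} g h \ne 1$, contradicting the previous lemma.

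More concretely, first I would apply Lemma \ref{limit} to the open interval $(n,+\infty)$ (noting that this has the form required by that lemma, with $n$ replaced by itself). This produces some $h \in G$ with $\alpha_h(\widetilde{\lambda_0}) \in e((n,+\infty))$, so there is $m \in (n,+\infty)$ with $e(m) = \alpha_h(\widetilde{\lambda_0})$. Second, for any $g \in G - \{1\}$, I would use the action property proved above to compute
\[
\alpha_g(e(m)) = \alpha_g \alpha_h(\widetilde{\lambda_0}) = \alpha_{gh}(\widetilde{\lambda_0}),
\]
and compare this to $e(m) = \alpha_h(\widetilde{\lambda_0})$. Equality would force $\alpha_{h^{-1}gh}(\widetilde{\lambda_0}) = \widetilde{\lambda_0}$, and since $h^{-1} g h \ne 1$, this contradicts the trivial stabilizer of $\widetilde{\lambda_0}$ established in the preceding lemma.

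There is no real obstacle here; the corollary is essentially a translation of ``the orbit of a point with trivial stabilizer meets every $e((n,+\infty))$'' into the form needed to feed into Proposition \ref{1-manifold}(b) in the subsequent argument. The only thing to be careful about is to invoke Lemma \ref{limit} with the open half-line $(n,+\infty)$ rather than the closed $[n,+\infty)$, which is fine since the lemma is stated for open intervals, and the conclusion of the corollary explicitly asks for $m \in (n,+\infty)$.
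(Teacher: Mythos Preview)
Your argument is correct and is exactly the intended one: the paper states the corollary with no proof, simply prefacing it by ``It follows that'', and the deduction you give --- apply Lemma \ref{limit} to land an orbit point $\alpha_h(\widetilde{\lambda_0})$ inside $e((n,+\infty))$, then use the trivial-stabilizer lemma via conjugation by $h$ --- is precisely what that phrase is pointing to. There is nothing to add.
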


Now
$(L(\mathcal{F}_0), \{\alpha_g \mid g \in G\}, e)$ can be considered as
the triple $(L,G,e)$ as given in Subsection \ref{subsection 3.1}.
Let $d: G \to \mathcal{G}_\infty$ be the homomorphism obtained by
Proposition \ref{1-manifold}.
Combined with Corollary \ref{trivial stabilizer},
we have

\begin{cor}
	For every $g \in G - \{1\}$,
	$d(g) \ne 1$.
\end{cor}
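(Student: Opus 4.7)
The plan is to derive this corollary as an immediate consequence of the two results that have just been established: Proposition~\ref{1-manifold}(b) and Corollary~\ref{trivial stabilizer}. I will fix an arbitrary $g \in G - \{1\}$, view it as the element $h$ in the hypothesis of Proposition~\ref{1-manifold}(b) (acting on $L(\mathcal{F}_0)$ via the homeomorphism $\alpha_g$), and read off $d(g) \neq 1$ as the conclusion of that proposition.

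The only thing to check is the hypothesis of Proposition~\ref{1-manifold}(b), namely that for every $n \in \mathbb{R}$ there exists some $m \in (n, +\infty)$ with $\alpha_g e(m) \neq e(m)$. But this is exactly what Corollary~\ref{trivial stabilizer} provides---in fact in a uniform form, yielding a single $m \in (n, +\infty)$ that works simultaneously for all non-identity elements of $G$. So the hypothesis is verified on the nose, and part (b) immediately delivers $d(g) \neq 1$.

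There is no real obstacle at this stage; the substantive work has already been done. The construction of the blown-up leaf space $L(\mathcal{F}_0)$ was needed to rigidify the dynamics along the $G$-orbit of $\widetilde{\lambda}$, the choice of $\phi: K \to \mathrm{Homeo}_+(I)$ with $\phi(g)(\tfrac{1}{2}) \neq \tfrac{1}{2}$ for all $g \in K - \{1\}$ was needed to give $\widetilde{\lambda_0}$ trivial stabilizer under $\{\alpha_g\}$, and the density of the leaf $\lambda$ combined with blowing-up was needed for Lemma~\ref{limit} and hence Corollary~\ref{trivial stabilizer}. Once those pieces are in place, the present corollary is essentially a one-line deduction.

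Finally, I would record the upshot for Theorem~\ref{foliation}: together with Proposition~\ref{1-manifold}(a), this corollary says the homomorphism $d: G \to \mathcal{G}_\infty$ is injective, realizing $\pi_1(M) = G$ as a subgroup of $\mathcal{G}_\infty$. Since $\mathcal{G}_\infty$ is left orderable by Theorem~\ref{Navas}, so is $\pi_1(M)$, completing the proof.
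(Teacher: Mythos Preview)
Your proof is correct and matches the paper's own approach exactly: the paper states the corollary as an immediate consequence of combining Corollary~\ref{trivial stabilizer} with Proposition~\ref{1-manifold}(b), which is precisely what you do. Your additional commentary on why the earlier constructions were needed and the concluding remarks about Theorem~\ref{foliation} also mirror the paper's surrounding discussion.
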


So $d$ is injective,
and thus $G$ is isomorphic to a subgroup of $\mathcal{G}_\infty$.
By Theorem \ref{Navas},
$\mathcal{G}_\infty$ is a left orderable group.
This completes the proof of Theorem \ref{foliation}.

\begin{remark}\rm
	Since $G$ is a countable left orderable group,
	$G$ acts nontrivially on $\mathbb{R}$.
	However,
	our approach does not give such an action.
	
	Theorem \ref{Navas} only
	ensures that a left-invariant order of $\mathcal{G}_\infty$ exists.
	There is no constructive proof for Theorem \ref{Navas} as far as we know
	(see some remarks in \hyperref[Ma]{[Ma, Section 1, Extension vs. realization]}).
	So we do not know a left-invariant order of $\mathcal{G}_\infty$ (or its dynamic realization),
	and 
	thus the homomorphism $d: G \to \mathcal{G}_\infty$ does not give
	a left-invariant order of $G$ or
	a nontrivial action of $G$ on $\mathbb{R}$.
\end{remark}

\end{document}